\newtheorem{theorem}{Teorema}
\theoremstyle{remark}
\numberwithin{equation}{section}
\newcommand{\Ccal}{\mathscr{C}}
\newcommand{\Z}{\mathbb{Z}}
\newcommand{\F}{\mathbb{F}}
\newcommand{\Q}{\mathbb{Q}}
\newcommand{\Aut}{\mathrm{Aut}}
  \DeclareFontFamily{U}{wncy}{}
    \DeclareFontShape{U}{wncy}{m}{n}{<->wncyr10}{}
    \DeclareSymbolFont{mcy}{U}{wncy}{m}{n}
    \DeclareMathSymbol{\Sha}{\mathord}{mcy}{"58}
\begin{document}
\title{Sobre los teoremas de Shafarevich y Siegel}

\author{Héctor Pastén}
\address{ Departamento de Matem\'aticas\newline
\indent Pontificia Universidad Cat\'olica de Chile, Facultad de Matem\'aticas\newline
\indent 4860 Av. Vicu\~na Mackenna,  Macul, RM, Chile}
\email[H. Pastén]{hector.pasten@uc.cl}%

%\thanks{}
\thanks{Financiado por ANID Fondecyt Regular 1230507 de Chile.}
\date{\today}
\subjclass[2020]{Primario: 11G05; Secundario: 14G05, 11F80} %
\keywords{Teorema de Shafarevich, curvas elípticas, finitud, ecuación $S$-unidad}%
%\dedicatory{}

\begin{abstract} Presentaremos una nueva demostración del teorema de Shafarevich sobre finitud de curvas elípticas con buena reducción fuera de un conjunto finito de primos dado. Esto da un nuevo punto de entrada a teoremas fundamentales de finitud diofantina tales como el teorema de Siegel sobre la ecuación $S$-unidad. Nuestro argumento está libre de aproximación diofantina o teoría de trascendencia, y se acerca más a las ideas de Faltings en su demostración de la conjetura de Mordell. 
\end{abstract}

\maketitle

%\tableofcontents

%%%%%%%%%%%%%%%%%%%%%%%%%%%%%%%%%%%%%%
%%%%%%%%%%%%%%%%%%%%%%%%%%%%%%%%%%%%%%
%%%%%%%%%%%%%%%%%%%%%%%%%%%%%%%%%%%%%%
%%%%%%%%%%%%%%%%%%%%%%%%%%%%%%%%%%%%%%
%%%%%%%%%%%%%%%%%%%%%%%%%%%%%%%%%%%%%%
%%%%%%%%%%%%%%%%%%%%%%%%%%%%%%%%%%%%%%

En lo que sigue $K$ será un campo de números fijo. Nuestro propósito es dar una nueva demostración del siguiente importante teorema de Shafarevich sin utilizar aproximación diofantina:

\begin{theorem}[Shafarevich]\label{TeoShafarevich} Sea $S$ un conjunto finito de lugares de $K$. Las curvas elípticas sobre $K$ con buena reducción fuera de $S$ forman sólo finitas clases de $K$-isomorfismo.
\end{theorem}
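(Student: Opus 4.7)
La estrategia será imitar el enfoque de Faltings a la finitud de variedades abelianas con buena reducción fuera de $S$, adaptada al caso unidimensional. El ingrediente aritmético central es el criterio de Néron-Ogg-Shafarevich: para cualquier primo racional $\ell$, una curva elíptica $E/K$ tiene buena reducción en un lugar $v$ de característica residual coprima con $\ell$ si y sólo si la representación de Galois $\rho_{E,\ell}\colon \Gal(\overline{K}/K) \to \Aut(T_\ell E) \simeq \mathrm{GL}_2(\Z_\ell)$ es no-ramificada en $v$. Así, bajo la hipótesis del teorema, $\rho_{E,\ell}$ es no-ramificada fuera del conjunto finito $S_\ell := S \cup \{v : v \mid \ell\}$.

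Primero acotaría las representaciones residuales $\bar\rho_{E,\ell}\colon \Gal(\overline{K}/K) \to \mathrm{GL}_2(\F_\ell)$: su núcleo define el campo $K(E[\ell])$, de grado a lo más $|\mathrm{GL}_2(\F_\ell)|$ sobre $K$ y no-ramificado fuera de $S_\ell$. Por el teorema de Hermite-Minkowski sólo hay finitas extensiones $L/K$ con esas cotas, luego sólo finitas $\bar\rho_{E,\ell}$ posibles salvo isomorfismo. Dado además que $\det \rho_{E,\ell}=\chi_\ell$ es el carácter ciclotómico (fijo) y que las trazas $a_v(E)=\tr \rho_{E,\ell}(\Frob_v)$, para $v\notin S_\ell$, son enteros con $|a_v|\leq 2\sqrt{Nv}$ por la cota de Hasse, el teorema de densidad de Chebotarev mostraría que la familia $\{a_v\}$ determina $\rho_{E,\ell}$ salvo semisimplificación, reduciendo el problema a finitas clases de representaciones.

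El paso siguiente sería invocar un análogo del teorema de isogenia de Faltings para curvas elípticas: la representación $\rho_{E,\ell}$ determina la clase de $K$-isogenia de $E$, lo que junto con lo anterior daría finitud de clases de isogenia. Para concluir, acotaría la finitud dentro de cada clase de isogenia usando que dos curvas isógenas difieren por un subgrupo $K$-racional finito cuyo grado debe estar acotado (por ejemplo via resultados tipo Serre sobre la apertura de la imagen de Galois, con tratamiento separado del caso con multiplicación compleja), reduciendo así a enumerar subgrupos estables bajo Galois de orden acotado.

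El obstáculo principal será evidentemente la invocación del teorema de isogenia: en la demostración original de Faltings éste es el paso más delicado y requiere una teoría de alturas sobre la variedad moduli de variedades abelianas. Esperaría que el punto novedoso del argumento del autor radique precisamente en este paso, ofreciendo una ruta más elemental en el caso elíptico que sortee la necesidad de aproximación diofantina o teoría de trascendencia, posiblemente explotando la estructura modular especial de las curvas elípticas o propiedades aritméticas finas del invariante $j$ para transitar directamente desde las representaciones residuales acotadas a la finitud de clases de $K$-isomorfismo.
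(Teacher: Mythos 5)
Tu propuesta identifica correctamente el esqueleto del enfoque de Faltings y reproduce una parte real del argumento del artículo: la combinación de Néron--Ogg--Shafarevich, Hermite--Minkowski, la cota de Hasse y Chebotarev para acotar las representaciones de Galois es esencialmente el lema de comparación de Faltings que el artículo invoca. Sin embargo, hay una laguna genuina en el paso central: invocas \emph{un análogo del teorema de isogenia de Faltings} para pasar de la representación $\rho_{E,\ell}$ a la clase de $K$-isogenia de $E$, y tú mismo reconoces que no sabes demostrarlo sin la teoría de alturas sobre el espacio de móduli. Ese paso queda sin justificar, y es precisamente el que todo el artículo está diseñado para evitar. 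Además, sin el teorema de isogenia tampoco sabes a priori que $V_\ell(E)$ sea semisimple, de modo que el argumento vía Chebotarev y Brauer--Nesbitt sólo controla la \emph{semisimplificación} de $\rho_{E,\ell}$ y no la representación misma; tu reducción a finitas clases de representaciones queda entonces incompleta incluso antes de llegar al paso de isogenia.

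El artículo sortea ambos problemas con dos ideas que no aparecen en tu propuesta. Primero, fija un primo $p$ escindido en $K$ y ajeno a $S$, y usa el método de Demjanenko--Manin (finitud de $X_0(p^n)(K)$ para $n$ adecuado) para garantizar que, salvo finitas clases que se descartan, $V_p(E)$ es \emph{irreducible}; la semisimplicidad es entonces automática y el lema de comparación de Faltings se aplica directamente a $V_p(E)$, sin teorema de isogenia. Segundo, en lugar de analizar clases de isogenia y subgrupos racionales de orden acotado, observa que una representación irreducible de dimensión $2$ sobre $\Q_p$ contiene sólo finitos retículos $G_K$-estables salvo homotecia (árbol de Bruhat--Tits), lo que acota las clases de isomorfismo de $T_p(E)$, y luego recupera la curva a partir de su módulo de Tate mediante un argumento puramente local sobre $\Q_p$: el teorema de Tate ($T_p$ determina el grupo $p$-divisible), el teorema de deformación de Serre--Tate y la finitud de curvas elípticas sobre $\F_p$. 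Para completar tu propuesta tendrías que o bien suplir una demostración del teorema de isogenia libre de aproximación diofantina (lo cual es esencialmente tan difícil como el problema original), o bien sustituir ese paso por un mecanismo como el del artículo que reconstruya la curva directamente desde su módulo de Tate.
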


\begin{proof} Incluya en $S$ los lugares sobre $2$, $3$ e $\infty$. Basta demostrar finitud de las clases de $K^{\rm alg}$-isomorfismo de las curvas elípticas $E$ sobre $K$ con buena reducción fuera de $S$. Esto debido a la finitud de $H^1(G_{K,S} , \Aut (E_{K^{\rm alg}}))$ con $E_{K^{\rm alg}}$ como grupo algebraico (ver por ejemplo el Corolario 4.15 del Capítulo 1 de \cite{Milne})  lo que no usa aproximación diofantina. 

Desde ahora fijamos un número primo $p$ que escinde en $K$ y no es divisible por primos de $S$.

Como se explica en \cite{Serre}, por el método de Demjanenko--Manin existe un entero $n=n(p,K)\ge 1$ tal que el conjunto de puntos $K$-racionales de la curva modular $X_0(p^n)$ es finito. Así, salvo finitas clases de $K^{\rm alg}$-isomorfismo (que descartamos desde ya, sin afectar el resultado deseado), las curvas elípticas sobre $K$ tienen su $G_K$-representación $p$-ádica de Tate $V_p(E) = T_p(E)\otimes\Q_p$ irreducible.

El lemma de comparación de Faltings (originalmente implícito en el Teorema 5 de \cite{Faltings} y explicitado en \cite{Deligne}) nos dice que cuando $E$ varía sobre nuestras curvas elípticas con buena reducción fuera de $S$ y con $V_p(E)$ irreducible (así, semisimple), esta $G_K$-representación sólo alcanza finitas clases de isomorfismo. Aquí basta usar la cota de Hasse para controlar las trazas de Frobenius. 

Una $G_K$-representación $\Q_p$-lineal irreducible de dimensión $2$ sólo contiene finitos $\Z_p$-retículos $G_K$-estables salvo homotecia. Esto viene de consideraciones elementales sobre el árbol de Bruhat--Tits, ver por ejemplo la Proposición 2.3.12 de \cite{Argaez}. Así, cuando $E$ varía sobre nuestras curvas elípticas, los módulos de Tate $T_p(E)$ sólo alcanzan finitas clases de isomorfismo. Sólo falta mostrar que cada una de estas clases proviene de a lo más finitas curvas elípticas sobre $K$, salvo $K^{\rm alg}$-isomorfismo. 

Recordando cómo elegimos $p$ y buscando la finitud de los invariantes $j$ relevantes, lo requerido viene del siguiente resultado local: \emph{dada una $G_{\Q_p}$-representación $T$ sobre $\Z_p$ de rango $2$, sólo existen finitas curvas elípticas $C$ sobre $\Q_p$ con buena reducción, salvo $\Q_p$-isomorfismo, tales que su módulo de Tate $T_p (C)$ es isomorfo a $T$ como $G_{\Q_p}$-representación $\Z_p$-lineal}. En efecto, si $\Ccal$ es el modelo de Néron de $C$ sobre $\Z_p$ con fibra especial $C_0$ (la cual es curva elíptica), el Corolario 1 al Teorema 4 de \cite{Tate} muestra que $T_p(C)$ determina de forma única el grupo $p$-divisible $\Ccal[p^\infty]$, y por el teorema de Serre--Tate (ver la sección 6 del reporte \cite{LubinSerreTate}) este grupo $p$-divisible en conjunto con $C_0$ determinan de forma única la deformación $\Ccal$ de $C_0$ (y así, de forma única a $C$). Sólo resta observar que el número de clases de isomorfismo de curvas elípticas $C_0$ sobre $\F_p$ es finito.
\end{proof}

\subsection*{Sobre los resultados de finitud aritmética utilizados} Demjanenko--Manin utiliza propiedades básicas de alturas y el teorema de Mordell--Weil. El lema de Faltings usa el teorema de Hermite--Minkowski, el teorema de Chebotarev, el lema de Nakayama y el teorema de Brauer--Nesbitt y, aceptando estos resultados, el argumento es bastante corto (ver p.249 de \cite{Deligne}).

\subsection*{Discusión sobre otras demostraciones} Clásicamente el Teorema \ref{TeoShafarevich} viene del teorema de Siegel sobre puntos enteros en curvas aplicado a una curva elíptica adecuada o, alternativamente, del teorema de Siegel de finitud de soluciones de la ecuación $S$-unidad (aunque aquí es necesario pasar a una extensión de $K$). Ambas estrategias nacen en la aproximación diofantina (Thue, Siegel, Roth, ...) y una alternativa moderna es la teoría de formas lineales en logaritmos. 

Desde otro frente, el Teorema \ref{TeoShafarevich} es el caso de dimensión $1$ de la conjetura de Shafarevich demostrada por Faltings en  \cite{Faltings}; de hecho, nuestro argumento se inspira hasta cierto punto en el método de Faltings pero en realidad es distinto. Por la vía de la ecuación $S$-unidad sobre campos de números, se obtiene otra demostración libre de aproximación diofantina gracias a los argumentos de Lawrence y Venkatesh en \cite{LawrenceVenkatesh} usando mapas de periodos $p$-ádicos. Mencionamos que otras demostraciones de finitud de la ecuación $S$-unidad existen sobre $\Q$ tales como \cite{Kim, MurtyPasten, Poonen} entre otras, pero no parecen ser suficientes para obtener el resultado sobre campos de números en general.

\subsection*{Aplicaciones} En la dirección recíproca, el Teorema \ref{TeoShafarevich} aplicado a la familia de Legendre da como consecuencia formal la finitud de soluciones de la ecuación $S$-unidad sobre campos de números (y así, muchos casos del Teorema de Siegel para puntos enteros en curvas) por lo que nuestro argumento da un nuevo punto de entrada a estos resultados fundamentales de la teoría de ecuaciones diofantinas.

%%%%%%%%%%%%%%%%%%%%%%%%%%%%%%%%%%%%%%
%%%%%%%%%%%%%%%%%%%%%%%%%%%%%%%%%%%%%%
%%%%%%%%%%%%%%%%%%%%%%%%%%%%%%%%%%%%%%
%%%%%%%%%%%%%%%%%%%%%%%%%%%%%%%%%%%%%%
%%%%%%%%%%%%%%%%%%%%%%%%%%%%%%%%%%%%%%
%%%%%%%%%%%%%%%%%%%%%%%%%%%%%%%%%%%%%%

\section*{Agradecimientos}

Esta investigación fue financiada por ANID Fondecyt Regular 1230507 de Chile. Agradezco Yuri Bilu y a Anthony Várilly-Alvarado por comentarios en una primera versión, y a Siddharth Mathur por aclarar unas dudas.

%%%%%%%%%%%%%%%%%%%%%%%%%%%%%%%%%%%%%%
%%%%%%%%%%%%%%%%%%%%%%%%%%%%%%%%%%%%%%
%%%%%%%%%%%%%%%%%%%%%%%%%%%%%%%%%%%%%%

\end{document}